\pgfplotsset{compat=1.18}
\title{A condition for non-negative Lin-Lu-Yau curvature\footnotetext{E-mail address: \texttt{moritz.hehl@uni-leipzig.de}}}
\author{Moritz Hehl}
\affil{Institute of Mathematics, Universit{\"a}t Leipzig, 04109 Leipzig, Germany }
\date{\today}
\theoremstyle{plain}
\newtheorem{lemma}{Lemma}[section]
\newtheorem{theorem}[lemma]{Theorem}
\theoremstyle{definition}
\newtheorem{definition}[lemma]{Definition}
\newtheorem{remark}[lemma]{Remark}
\newtheorem*{theoremMR}{Theorem~\ref{th:main_result}}
\newtheorem*{theoremS}{Theorem~\ref{th:sharpness}}
\numberwithin{equation}{section}
\begin{document}

\maketitle

\begin{abstract}
    We investigate the Ollivier-Ricci curvature and its modification introduced by Lin, Lu, and Yau on locally finite graphs. The main contribution of this work is a lower bound on the minimum vertex degree of a graph ensuring non-negative Lin-Lu-Yau curvature. Additionally, we examine the sharpness of this lower bound.
     \bigskip

    {\bf Keywords:} Graph curvature, Ollivier-Ricci curvature, Lin-Lu-Yau curvature, optimal transportation
    \bigskip

    {\bf Mathematics Subject Classification (2020):} 05C81, 53C21, 05C75, 68R10

\end{abstract}

\section{Introduction and statement of results}

For over a century, Ricci curvature has been a fundamental concept in the study of smooth Riemannian manifolds, quantifying their deviation from being flat. Due to its significance, there has been considerable interest in generalizing Ricci curvature to more general metric spaces beyond the smooth setting. This pursuit has led, in recent decades, to the development of various curvature notions for non-smooth and discrete structures. Particularly, Forman introduced a Ricci curvature notion on cell complexes via the Bochner Weitzenb\"ock decomposition of the discrete Hodge Laplacian in 2003 \cite{Forman2003}. In 2009, Ollivier gave a notion of Ricci curvature for Markov chains on metric spaces via optimal transport \cite{Ollivier2009}. Ollivier's notion of Ricci curvature is particularly well suited for locally finite graphs. In this setting, a modification of Ollivier's notion of Ricci curvature was introduced in 2011 by Lin, Lu, and Yau \cite{LLY2011}. In 2012, Lin and Yau gave a discrete Ricci curvature notion for graphs via the Bakry-\'Emery calculus \cite{Lin2012}. Erbar and Maas defined a Ricci curvature based on the displacement convexity of the entropy in the same year \cite{EM2012}. Although these various notions reduce to the classical Ricci curvature in the setting of Riemannian manifolds, they are not equivalent for graphs.

Ollivier's notion of Ricci curvature can be seen as a discrete analogue to a lower Ricci curvature bound characterization established by Renesse and Sturm \cite{RS2005}. His approach leverages optimal transport theory, as his definition of curvature is based on the Wasserstein distance. In the context of locally finite graphs, Ollivier's notion of Ricci curvature has recently received considerable attention. In this setting, the \textit{Ollivier-Ricci curvature} $\kappa_{\alpha}$ is defined on the edges of a graph and depends on an \textit{idleness parameter} $\alpha \in [0,1]$. The modification introduced by Lin, Lu, and Yau is obtained by differentiating the curvature with respect to the idleness parameter. We will refer to this refinement as the \textit{Lin-Lu-Yau curvature} and denote it by $\kappa$.

Of particular interest is the case of non-negative Ricci curvature, for which several fundamental results are known in the setting of Riemannian manifolds, including the Liouville property, Buser inequality, Harnack inequality, Li-Yau inequality, and eigenvalue estimates. Extensive work has been carried out to develop discrete counterparts of these results, e.g., \cite{Muench2023,Jost2019, Chung2014, Erbar2018, Horn2017, Bauer2015, Lui2019, Muench2018}. 

The main contribution of this work is the establishment of a lower bound on the minimum degree of a graph that ensures non-negative Lin-Lu-Yau curvature.

\begin{theorem}\label{th:main_result}
    Let $G=(V,E)$ be a finite graph. If $G$ satisfies
    \[
        \delta(G) \geq \frac{2\vert V \vert}{3} -1,
    \]
    then $Ric(G) \geq 0$.
\end{theorem}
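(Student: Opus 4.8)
The plan is to reduce the claim to an upper bound on an $L^1$-Wasserstein distance and then to establish that bound by exhibiting an explicit, almost-optimal coupling that exploits how a large minimum degree forces neighbourhoods to overlap. Fix an edge $\{x,y\}$, write $n=\vert V\vert$, let $d_x,d_y$ be the degrees of $x,y$, and assume without loss of generality $d_x\le d_y$. For the idleness parameter $\alpha\in(0,1)$ put $p=1-\alpha$ and consider the measures $\mu_x^\alpha,\mu_y^\alpha$ defining $\kappa_\alpha$. Since $Ric(G)\ge 0$ means $\kappa(x,y)\ge 0$ on every edge, and since $\kappa(x,y)=\lim_{\alpha\to 1^-}\kappa_\alpha(x,y)/(1-\alpha)$ with $\kappa_\alpha(x,y)=1-W_1(\mu_x^\alpha,\mu_y^\alpha)$ (as $d(x,y)=1$), it suffices to produce, for all $\alpha$ close to $1$, a coupling of cost at most $1$.

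First I would record the geometric consequence of the hypothesis: the graph has diameter at most $2$. Indeed, for non-adjacent $u,w$ both $N(u)$ and $N(w)$ lie in $V\setminus\{u,w\}$, so $\vert N(u)\cap N(w)\vert\ge d_u+d_w-(n-2)\ge 2\delta(G)-n+2\ge \tfrac{n}{3}>0$; hence $u,w$ share a neighbour and $d(u,w)=2$. Consequently every transport step below costs at most $2$. I then split the edge-neighbourhood into the $t=\vert N(x)\cap N(y)\vert$ common neighbours, the $a=d_x-1-t$ neighbours of $x$ alone, and the $b=d_y-1-t$ neighbours of $y$ alone, recording $1+t+a=d_x$, $1+t+b=d_y$ and, from $t\ge d_x+d_y-n$, the bound $b\le n-1-d_x$.

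The coupling keeps as much mass in place as possible and moves only an $O(p)$ amount over distance $2$. It keeps $\tfrac{p}{d_y}$ of the source mass at $x$, all of the source mass $\tfrac{p}{d_x}$ at $y$, and $\min(\tfrac{p}{d_x},\tfrac{p}{d_y})=\tfrac{p}{d_y}$ at each common neighbour; it transports the bulk $\alpha-\tfrac{p}{d_x}$ from $x$ to $y$ at distance $1$; and it routes the remaining excess (the leftover $\tfrac{p}{d_x}-\tfrac{p}{d_y}$ at $x$ and at each common neighbour, and $\tfrac{p}{d_x}$ at each $x$-exclusive neighbour) to the $b$ deficits of size $\tfrac{p}{d_y}$ at the $y$-exclusive neighbours. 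A mass-conservation check shows this last overhead totals exactly $b\tfrac{p}{d_y}$, and by the diameter bound it travels distance at most $2$. Hence
\[
 W_1(\mu_x^\alpha,\mu_y^\alpha)\le\left(\alpha-\frac{p}{d_x}\right)+\frac{2bp}{d_y},
\]
so that, using $\kappa(x,y)=\lim_{\alpha\to1^-}(1-W_1)/p$, one obtains
\[
 \kappa(x,y)\ge 1+\frac{1}{d_x}-\frac{2b}{d_y}.
\]

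It remains to verify $\tfrac{2b}{d_y}\le 1+\tfrac1{d_x}$. Using $b\le n-1-d_x$ and $d_x\ge\delta(G)\ge\tfrac{2n}{3}-1$ gives $2b\le 2n-2-2d_x\le\tfrac{2n}{3}$, while $d_y+\tfrac{d_y}{d_x}\ge d_y+1\ge\tfrac{2n}{3}$ since $d_y\ge d_x$ and $d_y\ge\tfrac{2n}{3}-1$; therefore $2b\le d_y(1+\tfrac1{d_x})$ and $\kappa(x,y)\ge 0$. The step I expect to be most delicate is the transport bookkeeping in the third paragraph: one must separate the bulk mass, which is forced to travel the full distance $1$ from $x$ to $y$, from the $O(p)$ overhead, verify that the overhead amounts to precisely $b\,p/d_y$, and see that it never needs to move further than distance $2$. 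Pinning down the constant $2$ in front of $b$ is exactly what produces the threshold $\tfrac{2n}{3}-1$, whereas a cruder accounting of the transport distances would lose this sharp constant.
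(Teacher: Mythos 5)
Your proof is correct, and its skeleton coincides with the paper's: the same diameter-two observation (proved the same way), the same split of the neighbourhoods into common and exclusive neighbours, and the same counting, namely $t=\vert N_{xy}\vert \ge d_x+d_y-n$ combined with $\delta(G)\ge \tfrac{2n}{3}-1$. The genuine difference is in the middle, i.e.\ how the Wasserstein estimate is produced and transferred to $\kappa$. The paper works at the single idleness $\alpha=\tfrac{1}{d_x+1}$, uses the linearity result of Bourne et al.\ (Theorem~\ref{th:relation_curvature}) to write $\kappa=\kappa_\alpha/(1-\alpha)$ there, and bounds $\kappa_\alpha$ by analysing an \emph{optimal} plan: the Eidi--Jost identity $\kappa_\alpha=\nu_0-\nu_2-2\nu_3$, the shared-mass lemma (\thref{dontmove}) to compute $\nu_0$ exactly, and the crude estimate $\nu_2\le 1-\nu_0$. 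You instead exhibit an explicit \emph{feasible} coupling, which upper-bounds $W_1$ without any structural facts about optimal plans, and you pass to $\alpha\to 1$ using only the definition of $\kappa$ as a limit; consequently you need none of those three cited results, and your bookkeeping does check out: the overhead is $p(1+t)\bigl(\tfrac{1}{d_x}-\tfrac{1}{d_y}\bigr)+\tfrac{ap}{d_x}=p-\tfrac{p(1+t)}{d_y}=\tfrac{pb}{d_y}$, and every overhead pair is at distance at most $2$ by the diameter bound. Quantitatively the two routes give the identical estimate: with $b=d_y-1-t$, your lower bound $p\bigl(1+\tfrac{1}{d_x}-\tfrac{2b}{d_y}\bigr)$ equals the paper's $2p\bigl(\tfrac{t+1}{d_y}+\tfrac{1}{2d_x}-\tfrac{1}{2}\bigr)$. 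The reason the paper can afford the cruder ``everything that moves travels distance two'' estimate is its choice of idleness: at $\alpha=\tfrac{1}{d_x+1}$ your distance-one bulk term $\alpha-\tfrac{p}{d_x}$ vanishes identically, so nothing is lost there --- whereas for $\alpha$ near $1$ that crude estimate degenerates (it tends to $-1$), which is exactly why the paper must invoke the Bourne et al.\ theorem to descend to small idleness, while your coupling, carrying the distance-one term explicitly, survives the limit $\alpha\to 1$ unaided.
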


A similar result was previously established in \cite[Theorem 6.11]{MH2024} for the case of regular graphs. The proof relies on a simplified formula for the Lin-Lu-Yau curvature specific to regular graphs. In the present work, we extend this result by utilizing an intuitive formula for the Ollivier-Ricci curvature, introduced in \cite{Eidi2020}, which is also applicable to non-regular graphs.

After deriving this lower bound on the minimum degree of a graph that guarantees non-negative Lin-Lu-Yau curvature, we proceed to discuss the
sharpness of the bound.

\begin{theorem}\label{th:sharpness}
    Let $n \in \mathbb{N}$ such that $2n/3 \in 2 \mathbb{N}$ and $2n/3\geq 6$. Then, there exists a graph $G=(V,E)$ with $\vert V \vert = n$ and 
    \[
        \delta(G) = \frac{2\vert V \vert}{3} -2,
    \]
    that contains an edge $x \sim y$ with $\kappa(x,y) < 0$.
\end{theorem}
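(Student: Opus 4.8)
The plan is to prove sharpness by an explicit construction. Write $m := \vert V\vert/3$, so that the hypotheses read $m \geq 3$ and the target minimum degree is $2m-2$. I would take the vertex set to be the disjoint union $V = \{x,y\} \sqcup S \sqcup T \sqcup W \sqcup \{r\}$ with $|S| = |T| = m$ and $|W| = m-3$, and declare the following edges: the edge $x \sim y$; all edges from $x$ to $S \cup W$ and from $y$ to $T \cup W$; a clique on each of $S$, $T$, and $W$; all edges between $W$ and $S \cup T$; and all edges from $r$ to $S \cup T$. The guiding principle is that the candidate edge $x \sim y$ should have as few common neighbours as the degree constraint allows while keeping the private neighbourhoods $S = N(x)\setminus N[y]$ and $T = N(y)\setminus N[x]$ non-adjacent, so that transported mass must travel distance two between them. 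The single padding vertex $r$ (adjacent to neither $x$ nor $y$) together with one common neighbour more than the absolute minimum are precisely what let every vertex of $S$ and $T$ reach degree $2m-2$ without ever creating an $S$--$T$ edge; this is the feature that a triangle-minimising construction lacks, and it is what tips the curvature negative.

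Next I would verify the degree profile: a direct count gives $\deg(x) = \deg(y) = 1 + m + (m-3) = 2m-2$ and $\deg(s) = \deg(t) = 2m-2$ for $s \in S$, $t \in T$, while $\deg(w) = 3m-2$ and $\deg(r) = 2m$. Hence $\delta(G) = 2m-2 = \tfrac{2\vert V\vert}{3}-2$, with the minimum attained exactly on $x$, $y$, and the private neighbours. (For $m=3$ the set $W$ is empty and the same count applies verbatim.)

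It then remains to evaluate $\kappa(x,y)$. Recall that $\kappa_\alpha(x,y) = 1 - W_1(\mu_x^\alpha,\mu_y^\alpha)$ and that the Lin-Lu-Yau curvature is the slope $\kappa(x,y) = \lim_{\alpha\to 1}\kappa_\alpha(x,y)/(1-\alpha)$. Writing $\beta := (1-\alpha)/(2m-2)$, the difference $\mu_x^\alpha - \mu_y^\alpha$ carries surplus $\alpha-\beta$ at $x$ and $\beta$ at each vertex of $S$, matching deficits at $y$ and at each vertex of $T$, while the common part on $W$ cancels. For $\alpha$ near $1$ the cheapest plan moves the surplus at $x$ to $y$ (distance $1$) and each unit on $S$ to $T$ (distance $2$), giving the upper bound $W_1 \leq \alpha + (2m-1)\beta$. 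The crux of the argument, and the step I expect to be the main obstacle, is the matching lower bound, since a transport plan alone only bounds the curvature from below and cannot certify negativity. For this I would invoke Kantorovich duality with the explicit $1$-Lipschitz potential $f(\cdot) = d(\cdot,\{y\}\cup T)$, which takes the values $f(y)=f(t)=0$, $f(x)=f(w)=f(r)=1$, and $f(s)=2$; pairing $f$ against $\mu_x^\alpha - \mu_y^\alpha$ reproduces exactly $\alpha + (2m-1)\beta$, so that $W_1 = \alpha+(2m-1)\beta$. Consequently $\kappa_\alpha(x,y) = -\,(1-\alpha)/(2m-2)$, which is linear in $1-\alpha$, and therefore $\kappa(x,y) = -1/(2m-2) < 0$. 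Beyond the duality certificate, the only other point requiring care is confirming that $S$ and $T$ are genuinely at distance two, i.e. that the construction introduces no accidental $S$--$T$ edge and no length-one shortcut; this is immediate from the edge list but must be checked to justify the distance-$2$ cost.
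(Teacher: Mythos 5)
Your proof is correct, and your graph is essentially the paper's graph: writing $m = n/3$ and $l = m-1$, your $S$, $T$, $W$, $r$ are exactly the paper's $\{x_0,\dots,x_l\}$, $\{y_0,\dots,y_l\}$, $\{z_0,\dots,z_{l-3}\}$, $v$, the only difference being the clique you place on $W$, which is harmless since it only raises degrees of vertices already above the minimum. Where you genuinely diverge is in evaluating $\kappa(x,y)$. The paper invokes Theorem~\ref{th:Lin_Lu_Yau_curvature}, the bijection formula from \cite{MH2024} for endpoints of equal degree: since every vertex of $S_1(x)\setminus B_1(y)$ is at distance $2$ from every vertex of $S_1(y)\setminus B_1(x)$, the infimum equals $2(l+1)$ and $\kappa(x,y) = -1/(2l)$ in two lines. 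You instead compute $W_1(\mu_x^\alpha,\mu_y^\alpha)$ from scratch for $\alpha$ near $1$: an explicit plan gives the upper bound $\alpha + (2m-1)\beta$, and the potential $f = d(\cdot,\{y\}\cup T)$ gives a matching lower bound via the easy direction of Kantorovich duality, namely $\sum_v f(v)\bigl(\mu_x^\alpha(v)-\mu_y^\alpha(v)\bigr) \le W_1(\mu_x^\alpha,\mu_y^\alpha)$ for $1$-Lipschitz $f$, which on a finite graph is an elementary estimate rather than a deep theorem. This yields $\kappa_\alpha(x,y) = -(1-\alpha)/(2m-2)$ exactly, hence $\kappa(x,y) = -1/(2m-2) < 0$. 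Your route is fully self-contained: it needs neither the cited formula from \cite{MH2024} nor Theorem~\ref{th:relation_curvature}, whose conclusion (linearity of $\kappa_\alpha$ in $1-\alpha$ near $\alpha=1$) your computation reproduces for this edge; and your insistence that a transport plan alone cannot certify negativity, so that a dual certificate is required, is exactly the right point of care. The paper's route is shorter but rests entirely on the equal-degree formula it imports. Both arguments produce the same value, $-1/(2l) = -1/(2m-2)$.
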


\section{Definitions and notations}\label{defs_and_notations}

We start by revisiting key concepts from Graph Theory and Optimal Transport Theory. Next, we present Ollivier's discrete Ricci curvature for graphs and its modification by Lin, Lu, and Yau.

\subsection{Graph Theory}

A \textit{simple graph} $G=(V,E)$ is an unweighted, undirected graph that contains no multiple edges or self-loops. The existence of an edge between two vertices $x,y \in V$ is denoted by $x \sim y$. For any two vertices $x, y \in V$, the \textit{combinatorial graph distance} is given by
\[
    d(x,y) = \inf\{n: \; x = x_0 \sim \ldots \sim x_n = y\}.
\]

The \textit{diameter} of $G$ is denoted by $diam(G) = \max_{x,y \in V} d(x,y)$. 

For $x \in V$ and $r \in \mathbb{N}$ we define the \textit{$r$-sphere centered at $x$} as $S_{r}(x) = \left\{y \in V: d(x,y) = r\right\}$ and the \textit{$r$-ball centered at $x$} as $B_{r}(x) = \left\{y \in V: d(x,y) \leq r\right\}$. Denote by $d_{x} = \vert S_{1}(x)\vert$ the \textit{degree} of a vertex $x\in V$. The \textit{minimum degree} of a graph $G$ is denoted by $\delta(G) = \min_{x\in V}d_{x}$. We call a graph $G$ \textit{locally finite} if every vertex has finite degree. The graph is said to be \textit{$d$-regular} if every vertex has the same degree $d$.

Given an edge $x \sim y$, we denote the set of common neighbors of $x$ and $y$ by $N_{xy} = S_{1}(x) \cap S_{1}(y)$.

\subsection{Ollivier-Ricci and Lin-Lu-Yau curvature}

The Wasserstein distance, a metric on the space of probability measures, plays a fundamental role in optimal transport theory.

\begin{definition}[Wasserstein distance]
    Let $G=(V,E)$ be a locally finite graph. Let $\mu_{1}, \mu_{2}$ be two probability measures on $V$. The \textit{Wasserstein distance} between $\mu_{1}$ and $\mu_{2}$ is defined as 
    \begin{equation} \label{eq:1}
        W_{1}(\mu_{1}, \mu_{2}) = \inf_{\pi \in \Pi(\mu_{1}, \mu_{2})} \sum_{x \in V} \sum_{y \in V} d(x,y)  \pi(x,y),
    \end{equation}
    where
    \begin{equation*}
        \Pi(\mu_{1}, \mu_{2}) = \left \{ \pi: V \times V \to [0,1]: \sum_{y \in V} \pi(x,y)=\mu_{1}(x), \; \sum_{x \in V} \pi(x,y)=\mu_{2}(y) \right \}. 
    \end{equation*}
\end{definition}

Imagine two distributions, $\mu_{1}$ and $\mu_{2}$, as piles of earth. A \textit{transport plan} $\pi \in \Pi(\mu_{1}, \mu_{2})$ transforms one pile of earth into another. The minimal effort which is required for such a transformation is measured by the Wasserstein distance. If the infimum in \ref{eq:1} is attained, we call $\pi$ an \textit{optimal transport plan} transporting $\mu_{1}$ to $\mu_{2}$. 

In optimal transport theory, it is well known that no mass needs to be transported when it is already shared between the two probability measures.

\begin{lemma}\thlabel{dontmove}
    Let $G=(V,E)$ be a locally finite graph. Let $\mu_{1}, \mu_{2}$ be two probability measures on $V$. Then there exists an optimal transport plan $\pi$ transporting $\mu_{1}$ to $\mu_{2}$ satisfying
    \begin{equation*}
        \pi(x,x) = \min\{\mu_{1}(x), \mu_{2}(x)\}
    \end{equation*}
    for all $x\in V$.
\end{lemma}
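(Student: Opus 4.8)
The plan is to start from any optimal transport plan and to locally reroute mass so that whatever is already shared by $\mu_1$ and $\mu_2$ is left in place, using the triangle inequality to argue that such rerouting never increases the transport cost. The intuition is exactly the ``don't move what is already where it should be'' principle, and the proof turns this into a rearrangement of an optimizer.

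First I would record the elementary bound $\pi(x,x) \le \min\{\mu_1(x),\mu_2(x)\}$, valid for every coupling $\pi$, since $\pi(x,x) \le \sum_{y} \pi(x,y) = \mu_1(x)$ and $\pi(x,x) \le \sum_{z} \pi(z,x) = \mu_2(x)$. Summing over $x$, the total diagonal mass $D(\pi) := \sum_{x} \pi(x,x)$ satisfies $D(\pi) \le m$, where $m := \sum_{x} \min\{\mu_1(x),\mu_2(x)\}$. The desired conclusion is then equivalent to producing an \emph{optimal} plan with $D(\pi) = m$, because equality in a sum of terms each bounded above forces termwise equality. I would therefore fix an optimal plan $\pi$ for which $D(\pi)$ is maximal; such a maximizer exists because the set of optimal plans is compact (a face of the coupling polytope in the finitely supported case) and $D$ is continuous on it.

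The heart of the argument is a local exchange showing $D(\pi) = m$ at this maximizer. Suppose not, so $\pi(x,x) < \min\{\mu_1(x),\mu_2(x)\}$ for some $x$. The row and column marginal constraints then force the existence of $y \ne x$ with $\pi(x,y) > 0$ and of $z \ne x$ with $\pi(z,x) > 0$: some mass leaves $x$ while some mass enters $x$. Setting $\varepsilon = \min\{\pi(x,y),\pi(z,x)\} > 0$, I reroute by decreasing $\pi(x,y)$ and $\pi(z,x)$ by $\varepsilon$ and increasing $\pi(x,x)$ and $\pi(z,y)$ by $\varepsilon$. A direct check shows every marginal is preserved, so the result is again a coupling, and the change in cost is
\[
    \varepsilon\bigl(d(x,x) + d(z,y) - d(x,y) - d(z,x)\bigr) = \varepsilon\bigl(d(z,y) - d(x,y) - d(z,x)\bigr) \le 0
\]
by the triangle inequality and $d(x,x)=0$. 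Since $\pi$ is optimal the cost cannot strictly decrease, so it is unchanged and the new plan is still optimal, yet its diagonal mass is $D(\pi)+\varepsilon$, contradicting maximality. (In the degenerate case $y=z$ the bracket equals $-2d(x,y) < 0$, contradicting optimality outright.) Hence $D(\pi)=m$, which gives $\pi(x,x)=\min\{\mu_1(x),\mu_2(x)\}$ for every $x$.

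The step I expect to be the main obstacle is achieving the property simultaneously at \emph{all} vertices: a single exchange only repairs the diagonal at one vertex, so rather than iterating exchanges (whose termination is unclear when the support is infinite) the clean route is to maximize $D$ over the compact set of optimal plans and to read off the conclusion from the absence of an improving exchange at the maximizer. This reduces the whole lemma to justifying that the maximum of a linear functional is attained on the set of optimizers. In the intended application the measures are finitely supported, so this is just the attainment of a maximum on a polytope; for general probability measures on a locally finite graph one instead invokes weak-$*$ compactness of $\Pi(\mu_1,\mu_2)$ together with lower semicontinuity of the cost.
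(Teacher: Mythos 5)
Your proof is correct, but there is nothing in the paper to compare it against: the author states this lemma without proof, introducing it only as a well-known fact of optimal transport theory, so you have in effect supplied the folklore argument that the paper omits. Your route is the standard exchange argument, and every step checks out: the a priori bound $\pi(x,x)\le\min\{\mu_1(x),\mu_2(x)\}$; the reduction of the lemma to maximizing the diagonal mass $D(\pi)=\sum_x\pi(x,x)$ over the set of optimal plans; the existence of $y\ne x$ with $\pi(x,y)>0$ and $z\ne x$ with $\pi(z,x)>0$ whenever the diagonal bound is strict at $x$; the verification that the rerouted plan has the same marginals; and the cost change $\varepsilon\bigl(d(z,y)-d(x,y)-d(z,x)\bigr)\le 0$ from the triangle inequality, which combined with optimality of $\pi$ yields an optimal plan with strictly larger $D$, contradicting maximality (your separate treatment of the degenerate case $y=z$ is also right). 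Maximizing $D$ over the set of optimizers, rather than iterating local repairs, is indeed the clean way to obtain the identity at all vertices simultaneously. Two minor points for the lemma in its stated generality (infinite $V$): to justify attainment of the maximum of $D$ you should note that $D$ is weakly continuous --- the diagonal is a clopen subset of the discrete space $V\times V$, so the portmanteau theorem applies --- and if $G$ is disconnected then every coupling may have infinite cost, in which case the $\infty-\infty$ arithmetic in the exchange degenerates but the lemma is trivial, since any coupling is then optimal and one can directly take the plan that places $\min\{\mu_1(x),\mu_2(x)\}$ on the diagonal and couples the residual measures arbitrarily. Neither point affects the paper's application, where the measures $\mu_x^\alpha$ and $\mu_y^\alpha$ are finitely supported and concentrated in a common ball, so the finite polytope argument you give suffices there.
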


To each vertex $x\in V$ and idleness parameter $\alpha \in [0,1]$, we associate a probability measure $\mu_x^{\alpha}$ defined by

\begin{equation*}
    \mu_{x}^{\alpha}(y) =
    \begin{cases}
        \alpha, & \text{if $y = x$;}\\
        \frac{1-\alpha}{d_{x}}, & \text{if $y \sim x$;}\\
        0, & \text{otherwise.}
    \end{cases}
\end{equation*}

Then, the Ollivier-Ricci curvature is defined as follows.

\begin{definition}[Ollivier-Ricci curvature]
    Let $G=(V,E)$ be a locally finite graph. We define the \textit{$\alpha$-Ollivier-Ricci curvature} of an edge $x \sim y$ by
    \begin{equation*}
        \kappa_{\alpha}(x,y) = 1 - W_{1}(\mu_{x}^{\alpha}, \mu_{y}^{\alpha}).
    \end{equation*}
    The parameter $\alpha$ is called the \textit{idleness}.
\end{definition}

Following \cite{Eidi2020}, we present a more intuitive formulation of the Ollivier-Ricci curvature. To this end, denote by $\nu_{i}^{\alpha}$ the mass transported with distance $i$ under an optimal transport plan transporting $\mu_{x}^{\alpha}$ to $\mu_{y}^{\alpha}$. Then 
\begin{equation*}
    \sum_{i=0}^{3} \nu_{i}^{\alpha} = 1 \quad \text{and} \quad W_{1}(\mu_{x}^{\alpha}, \mu_{y}^{\alpha}) = \sum_{i=1}^{3} i \nu_{i}^{\alpha}.
\end{equation*}
Hence, we conclude 
\begin{equation*}
    \kappa_{\alpha}(x,y) = \nu_{0}^{\alpha} - \nu_{2}^{\alpha} - 2\nu_{3}^{\alpha}.
\end{equation*}
This formula plays a crucial role in proving our main result.

Recall that the $\alpha$-Ollivier-Ricci curvature depends on the idleness parameter. Ollivier initially considered the cases $\alpha = 0$ and $\alpha = \frac{1}{2}$, but its behavior across the full range of 
$\alpha$ remained an open question. Lin, Lu, and Yau \cite{LLY2011} first established that $\kappa_{\alpha}$ is concave in $\alpha \in [0,1]$. Using $\kappa_{1}(x,y) = 0$, one obtains that the function $h(\alpha) = \frac{\kappa_{\alpha}(x,y)}{1-\alpha}$  is increasing on the interval $[0,1)$. By further proving that $h(\alpha)$ is bounded, they concluded that the limit $\lim_{\alpha \to 1} h(\alpha)$ exists. Building on this result, Lin, Lu, and Yau introduced a modified version of the Ollivier-Ricci curvature, obtained by differentiating $\kappa_{\alpha}$ with respect to the idleness parameter $\alpha$.

\begin{definition}[Lin-Lu-Yau curvature]
    Let $G=(V,E)$ be a locally finite graph. The \textit{Lin-Lu-Yau curvature} of an edge $x \sim y$ is defined as 
    \begin{equation*}
        \kappa(x,y) = \lim_{\alpha \to 1} \frac{\kappa_{\alpha}(x,y)}{1-\alpha}.
    \end{equation*}
\end{definition}

In what follows, we write $Ric(G) \geq k$ if $\kappa(x,y) \geq k$ for all edges $x\sim y$ in $G$.

In \cite{Bourne2018}, Bourne et al. conducted a more detailed analysis of how $\kappa_{\alpha}$ depends on the idleness parameter $\alpha$, by introducing the Ollivier-Ricci idleness function $\alpha \mapsto \kappa_{\alpha}(x,y)$. They demonstrated that the idleness function is piecewise linear with at most three linear parts. Additionally, they determined the length of the final linear part.

\begin{theorem}[\cite{Bourne2018}, Theorem 4.4]
    Let $G=(V,E)$ be a locally finite graph and let $x,y \in V$ with $x \sim y$ and $d_{x} \geq d_{y}$. Then $\alpha \to \kappa_{\alpha}(x,y)$ is linear over $\left[\frac{1}{d_{x} + 1}, 1\right].$
\end{theorem}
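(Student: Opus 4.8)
The plan is to prove sharpness by exhibiting, for every admissible $n=3m$, an explicit graph meeting $\delta(G)=2m-2$ that carries a single edge of negative curvature. The computational engine will be the intuitive formula $\kappa_\alpha(x,y)=\nu_0^\alpha-\nu_2^\alpha-2\nu_3^\alpha$ together with the cited linearity of $\alpha\mapsto\kappa_\alpha(x,y)$ on the final segment, which let me read off $\kappa(x,y)$ from the regime $\alpha\to 1$. First I would record a reduction for an edge $x\sim y$ with $d_x=d_y=d$: writing $P=S_1(x)\setminus(\{y\}\cup N_{xy})$ and $Q=S_1(y)\setminus(\{x\}\cup N_{xy})$ for the exclusive neighborhoods (so $|P|=|Q|$), an optimal plan for $\alpha$ near $1$ keeps the shared mass fixed by \thref{dontmove}, ships the bulk mass from $x$ to $y$ along the edge, and transports the uniform unit-per-vertex distribution on $P$ to that on $Q$. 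This yields $\kappa(x,y)=1+(1-T)/d$, where $T$ is the minimum cost of a perfect matching between $P$ and $Q$ in the graph distance; in particular $\kappa(x,y)<0$ iff $T>d+1$. The whole problem thus reduces to building a dense graph in which this matching is expensive.

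Next I would give the construction for $m\geq 4$. Take $x,y$, a set $N_{xy}$ of $m-4$ common neighbors, and exclusive neighborhoods $P=\{u,a_1,\dots,a_m\}$, $Q=\{v,b_1,\dots,b_m\}$ of size $m+1$, so $|V|=3m$. Join $x$ to $N_{xy}\cup P$ and $y$ to $N_{xy}\cup Q$; make each of $P,Q$ a clique; make every common neighbor adjacent to $x,y$ and to all of $P\cup Q$; and, the key device, route \emph{all} cross edges through two hubs only, namely $v\sim a_i$ and $u\sim b_j$ for all $i,j$. A direct count gives $d_x=d_y=d_{a_i}=d_{b_j}=2m-2$, while $u,v$ and the common neighbors have strictly larger degree, so $\delta(G)=2m-2=\tfrac{2|V|}{3}-2$ exactly.

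Then I would compute $T$ for $x\sim y$. The bipartite graph of cross edges between $P$ and $Q$ is covered by the two hubs $\{u,v\}$, so its maximum matching has size $2$; hence any perfect matching $P\to Q$ realizes at most two pairs at distance $1$, and every remaining pair is non-adjacent, at distance exactly $2$ (through a common neighbor, or through a hub when $N_{xy}=\emptyset$). This forces $T\geq 2\cdot 1+(m-1)\cdot 2=2m$, and $u\mapsto b_1,\ a_1\mapsto v,\ a_i\mapsto b_i\ (i\geq 2)$ attains it, so $T=2m>2m-1=d+1$ and $\kappa(x,y)=1+(1-2m)/(2m-2)=-1/(2m-2)<0$. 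For the exceptional $m=3$ (where $N_{xy}$ would be empty and the hub gadget too small) I would instead take two disjoint $4$-cliques $\{x\}\cup P$ and $\{y\}\cup Q$ with $|P|=|Q|=3$, the edge $x\sim y$, and one extra vertex $r$ adjacent to all of $P\cup Q$ and nothing else; here there are no cross edges, every $P$-$Q$ pair lies at distance $2$ via $r$, so $T=6>5$ and $\kappa(x,y)=-1/4<0$.

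The main obstacle is the tension between the minimum-degree requirement and the need to keep $P$ and $Q$ far apart. Because $\delta$ is close to $|V|$, each vertex of $P$ can avoid at most $m$ of the $m+1$ vertices of $Q$, so cross edges are unavoidable, and a naive construction would let the matching exploit many distance-$1$ pairs, pushing $T$ below $d+1$ and the curvature back to non-negative. Confining all cross edges to a size-$2$ vertex cover is exactly what caps the matching number at $2$ and keeps $T$ just above threshold; checking that this still meets every degree constraint, and that all remaining $P$-$Q$ pairs sit at distance exactly $2$, is the delicate part. I would also confirm the reduction formula rigorously by pairing the explicit plan with a matching $1$-Lipschitz potential (take $f\equiv 2$ on $P\setminus\{u\}$, $f(x)=f(u)=1$, $f\equiv 1$ on $\{v\}\cup N_{xy}$, and $f\equiv 0$ on $\{y\}\cup(Q\setminus\{v\})$, which is $1$-Lipschitz and attains value $2m$ for the $P$-$Q$ part), so that Kantorovich duality certifies optimality of $W_1$ and hence the value of $\kappa(x,y)$.
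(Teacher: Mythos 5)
Your proposal does not address the statement it is supposed to prove. The statement in question is the result of Bourne et al.\ (\cite{Bourne2018}, Theorem 4.4): that the idleness function $\alpha \mapsto \kappa_{\alpha}(x,y)$ is \emph{linear} on the interval $\left[\frac{1}{d_{x}+1},1\right]$ when $d_x \geq d_y$. What you have written is instead a proof sketch of the sharpness result (Theorem~\ref{th:sharpness}): a construction of a graph on $n=3m$ vertices with $\delta(G)=\frac{2|V|}{3}-2$ carrying an edge of negative Lin-Lu-Yau curvature. Nothing in your argument analyzes how $\kappa_{\alpha}(x,y)$ depends on $\alpha$ --- there is no study of how optimal transport plans or Kantorovich potentials vary with the idleness parameter, no use of concavity of $\alpha \mapsto \kappa_{\alpha}(x,y)$, and no identification of a single plan--potential pair that remains optimal throughout the final segment, which is the substance of a proof of the linearity theorem. (In the paper this statement is imported from \cite{Bourne2018} without proof, but your attempt still targets the wrong claim.)

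Worse, your argument is circular relative to the statement: you explicitly invoke ``the cited linearity of $\alpha \mapsto \kappa_{\alpha}(x,y)$ on the final segment'' to read off $\kappa(x,y)$ from the regime $\alpha \to 1$, and your reduction formula $\kappa(x,y) = 1 + (1-T)/d$ is essentially Theorem~\ref{th:Lin_Lu_Yau_curvature}, whose derivation also rests on Theorem~\ref{th:relation_curvature} and hence on the very linearity result to be proved. So even setting aside the mismatch of statements, the material here assumes the conclusion and cannot be repaired into a proof of it. As a separate matter, your construction itself appears to be a workable (if different) route to Theorem~\ref{th:sharpness} --- the paper's own construction instead makes \emph{all} pairs between $S_1(x)\setminus B_1(y)$ and $S_1(y)\setminus B_1(x)$ lie at distance exactly $2$, avoiding your hub device and the matching-number analysis --- but that is not the theorem under review.
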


As a direct consequence of the mean value theorem, they derived the following relation between the $\alpha$-Ollivier-Ricci curvature and its modification by Lin, Lu, and Yau.

\begin{theorem}\label{th:relation_curvature}
    Let $G=(V,E)$ be a locally finite graph and let $x,y \in V$ with $x \sim y$ and $d_{x} \geq d_{y}$. Then
    \begin{equation*}
        \kappa_{\alpha}(x,y) = (1-\alpha)\kappa(x,y)
    \end{equation*}
    for $\alpha \in \left[\frac{1}{d_{x} +1},1\right]$.
\end{theorem}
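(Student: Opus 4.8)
The plan is to derive this directly from the preceding theorem of Bourne et al., which guarantees that $\alpha \mapsto \kappa_{\alpha}(x,y)$ is linear on the interval $\left[\frac{1}{d_{x}+1}, 1\right]$, together with the boundary value $\kappa_{1}(x,y) = 0$ recorded earlier in the text. The point is that on an affine function the secant slope between any two points coincides with the constant derivative, which is precisely the content of the mean value theorem that the statement invokes; combined with the normalization $\kappa_{1}(x,y)=0$ this pins the affine function down to a single scalar, namely $\kappa(x,y)$.

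First I would record that, by the cited theorem, there exist constants $a,b \in \mathbb{R}$ with $\kappa_{\alpha}(x,y) = a\alpha + b$ for every $\alpha \in \left[\frac{1}{d_{x}+1}, 1\right]$. Evaluating at $\alpha = 1$ and using $\kappa_{1}(x,y) = 0$ forces $a + b = 0$, hence $b = -a$ and
\begin{equation*}
    \kappa_{\alpha}(x,y) = -a(1-\alpha) \qquad \text{for all } \alpha \in \left[\tfrac{1}{d_{x}+1}, 1\right].
\end{equation*}

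Next I would compute the Lin-Lu-Yau curvature from its definition. Since the limit $\alpha \to 1$ only probes values of $\alpha$ lying inside the linear region, I may substitute the affine expression directly:
\begin{equation*}
    \kappa(x,y) = \lim_{\alpha \to 1} \frac{\kappa_{\alpha}(x,y)}{1-\alpha} = \lim_{\alpha \to 1} \frac{-a(1-\alpha)}{1-\alpha} = -a .
\end{equation*}
Equivalently, framed through the mean value theorem: for any $\alpha$ in the interval the secant slope $\frac{\kappa_{1}(x,y) - \kappa_{\alpha}(x,y)}{1-\alpha}$ equals the constant derivative of the affine function, and letting $\alpha \to 1$ identifies this slope with $-\kappa(x,y)$, the sign being a consequence of $\kappa_{1}(x,y) = 0$.

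Finally, substituting $-a = \kappa(x,y)$ back into $\kappa_{\alpha}(x,y) = -a(1-\alpha)$ yields $\kappa_{\alpha}(x,y) = (1-\alpha)\kappa(x,y)$ for all $\alpha \in \left[\frac{1}{d_{x}+1}, 1\right]$, as claimed. I do not anticipate a genuine obstacle: the whole argument rests on the linearity supplied by the previous theorem, so the only point deserving a word of care is the observation that the defining limit of $\kappa$ falls within the linear region and therefore recovers the secant slope exactly rather than merely asymptotically.
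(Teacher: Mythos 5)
Your proof is correct and follows exactly the route the paper intends: the paper states this result as a direct consequence of Bourne et al.'s linearity theorem together with $\kappa_{1}(x,y)=0$, which is precisely the argument you spell out. The only detail worth noting is that your parenthetical mean-value-theorem framing is redundant once you have the explicit affine form, but it is sign-consistent and introduces no error.
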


\begin{remark}
   Loisel and Romon had previously established this relation in \cite{Loisel2014} for $\alpha \geq \frac{1}{2}$ in 2014.
\end{remark}

Consequently, for large values of $\alpha$, the Lin-Lu-Yau curvature aligns with the $\alpha$-Ollivier-Ricci curvature up to a scaling factor.

In the case of regular graphs, the optimal transport problem can be reduced to an optimal assignment problem between subsets of the spheres. This observation was leveraged in \cite{MH2024} to derive a more concise expression for the Lin-Lu-Yau curvature.

\begin{theorem}[\cite{MH2024}, Theorem 4.3]\label{th:Lin_Lu_Yau_curvature}
    Let $G=(V,E)$ be a locally finite graph. Let $x,y \in V$ be of equal degree $d$ with $x \sim y$. Then the Lin-Lu-Yau curvature
    \begin{equation*}
        \kappa(x,y) = \frac{1}{d}\Biggl(d+1 - \inf_{\phi} \mathlarger{\sum}_{z \in S_{1}(x) \setminus B_{1}(y)}d(z, \phi(z)) \Biggr),
    \end{equation*}
    where the infimum is taken over all bijections $\phi:S_1(x)\setminus B_1(y) \to S_1(y)\setminus B_1(x)$.
\end{theorem}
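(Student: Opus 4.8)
The plan is to reduce the Lin-Lu-Yau curvature to a single optimal transport computation at the distinguished idleness value $\alpha = \frac{1}{d+1}$, where the probability measures become uniform. Since $d_x = d_y = d$, Theorem~\ref{th:relation_curvature} gives $\kappa_\alpha(x,y) = (1-\alpha)\kappa(x,y)$ for all $\alpha \in [\frac{1}{d+1}, 1]$; evaluating at $\alpha = \frac{1}{d+1}$ yields $\kappa_{1/(d+1)}(x,y) = \frac{d}{d+1}\kappa(x,y)$, so that
\[
\kappa(x,y) = \frac{d+1}{d}\,\kappa_{1/(d+1)}(x,y) = \frac{d+1}{d}\bigl(1 - W_1(\mu_x^{1/(d+1)}, \mu_y^{1/(d+1)})\bigr).
\]
The whole task thus becomes the evaluation of this one Wasserstein distance.

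The key observation is that at $\alpha = \frac{1}{d+1}$ the idle mass $\alpha$ and the per-neighbour mass $\frac{1-\alpha}{d}$ coincide, both equal to $\frac{1}{d+1}$. Hence $\mu_x^{1/(d+1)}$ is the uniform measure on the closed ball $B_1(x)$, which has $d+1$ elements, and likewise for $y$. I would then identify the shared mass: a vertex lies in $B_1(x)\cap B_1(y)$ precisely when it is $x$, $y$, or a common neighbour, so $B_1(x)\cap B_1(y) = \{x,y\}\cup N_{xy}$, and on this set both measures assign the common value $\frac{1}{d+1}$. By \thref{dontmove} there is an optimal plan that leaves all of this shared mass in place, contributing nothing to the cost.

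What remains is to transport the uniform measure on $A := S_1(x)\setminus B_1(y)$, each vertex carrying mass $\frac{1}{d+1}$, onto the uniform measure on $B := S_1(y)\setminus B_1(x)$. A short count shows $|A| = |B| = d - 1 - |N_{xy}|$, so the two sets have equal size and bijections $\phi : A \to B$ exist. I would argue that the restriction of any such don't-move optimal plan to $A\times B$ is itself a minimum-cost transport between these two uniform measures, since otherwise a cheaper $A\times B$ block glued to the fixed diagonal would beat $W_1$. Scaling such a block by $d+1$ produces a doubly stochastic matrix, and by the Birkhoff--von Neumann theorem the linear cost functional attains its minimum at an extreme point, i.e. a permutation matrix, which is exactly a bijection $\phi$. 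Therefore
\[
W_1(\mu_x^{1/(d+1)}, \mu_y^{1/(d+1)}) = \frac{1}{d+1}\inf_{\phi} \sum_{z \in S_1(x)\setminus B_1(y)} d(z, \phi(z)),
\]
and substituting this into the displayed expression for $\kappa(x,y)$ gives the claimed formula after simplification.

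The main obstacle I anticipate is the rigorous reduction of the transport problem on the symmetric difference to the discrete assignment problem over bijections: one must verify both that restricting an optimal plan to $A \times B$ loses no optimality and that the extreme points of the relevant transport polytope are genuinely permutations, which is the Birkhoff--von Neumann step. The remaining pieces --- the passage through Theorem~\ref{th:relation_curvature}, the recognition of the uniform measures at $\alpha = \frac{1}{d+1}$, and the cardinality count $|A| = |B|$ --- are routine once this core is in place.
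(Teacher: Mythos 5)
Your proposal is correct, and it follows essentially the same route as the proof the paper cites from \cite{MH2024}: evaluating at the idleness $\alpha = \frac{1}{d+1}$ where both measures become uniform on the closed unit balls, fixing the shared mass on $\{x,y\}\cup N_{xy}$ via \thref{dontmove}, and reducing the leftover balanced transport between $S_1(x)\setminus B_1(y)$ and $S_1(y)\setminus B_1(x)$ to an optimal assignment problem via the Birkhoff--von Neumann theorem. All the key steps --- the applicability of Theorem~\ref{th:relation_curvature} at this $\alpha$, the cardinality count $\vert A\vert = \vert B\vert = d-1-\vert N_{xy}\vert$, and the observation that the don't-move plan forces the residual coupling to be supported on $A\times B$ --- are handled correctly.
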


\section{Condition for non-negative Lin-Lu-Yau curvature}

In this section, we present the main result of this work: A condition on the minimum degree of a graph $G$, ensuring that it satisfies $Ric(G) \geq 0$. Additionally, we discuss the sharpness of this lower bound.

\begin{theoremMR}
    Let $G=(V,E)$ be a finite graph. If $G$ satisfies
    \[
        \delta(G) \geq \frac{2\vert V \vert}{3} -1,
    \]
    then $Ric(G) \geq 0$.
\end{theoremMR}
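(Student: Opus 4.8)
The plan is to fix an edge \(x \sim y\) with, without loss of generality, \(d_x \geq d_y\), and to prove the equivalent statement \(\kappa_\alpha(x,y) \geq 0\) for \(\alpha\) close to \(1\): by Theorem~\ref{th:relation_curvature} one has \(\kappa_\alpha(x,y) = (1-\alpha)\kappa(x,y)\) on \([\frac{1}{d_x+1},1)\), so there \(\kappa_\alpha(x,y)\) and \(\kappa(x,y)\) share the same sign and it suffices to control one convenient \(\alpha\). I would then work with the formula \(\kappa_\alpha(x,y) = \nu_0^\alpha - \nu_2^\alpha - 2\nu_3^\alpha\) recalled above. The first observation is that the degree hypothesis forces a small diameter: for any two vertices \(z,w\) one has \(|S_1(z)\cap S_1(w)| \geq d_z + d_w - |V| \geq 2\delta(G) - |V| > 0\) once \(|V| > 6\), so every pair of vertices is at distance at most \(2\) and hence \(\nu_3^\alpha = 0\) (the finitely many small graphs can be checked directly). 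The task thus reduces to showing \(\nu_0^\alpha \geq \nu_2^\alpha\).

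Second, I would pin down the two quantities. Writing \(A_x := S_1(x)\setminus B_1(y)\) and \(A_y := S_1(y)\setminus B_1(x)\) for the private neighbourhoods, inclusion--exclusion gives \(|A_x| = d_x - 1 - |N_{xy}|\), \(|A_y| = d_y - 1 - |N_{xy}|\) and \(|N_{xy}| \geq d_x + d_y - |V|\). By \thref{dontmove} the mass that can stay in place is exactly the shared mass, which for \(\alpha\) near \(1\) equals \(\nu_0^\alpha = \frac{1-\alpha}{d_y} + (1+|N_{xy}|)\frac{1-\alpha}{d_x}\). To estimate \(\nu_2^\alpha\) I would build an explicit transport plan: after keeping the shared mass, the residual supply sits at \(x\) (mass \(\alpha - \frac{1-\alpha}{d_y}\)) and on \(A_x\) (mass \(\frac{1-\alpha}{d_x}\) each), while the residual demand sits at \(y\), on \(N_{xy}\), and on \(A_y\). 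Since the demand at \(y\) exceeds the supply at \(x\), I would first send all of \(x\)'s residual supply to \(y\) along the edge \(x\sim y\) (distance \(1\)); what then remains is supply only on \(A_x\) and demand on \(y\), \(N_{xy}\) and \(A_y\).

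The crux is to route the mass leaving \(A_x\) at distance \(1\) as far as possible, the only unavoidable distance-\(2\) contribution being the residual demand at \(y\), which is forced to come from \(A_x\) and equals \((1-\alpha)\big(\frac{1}{d_y}-\frac{1}{d_x}\big)\) --- comfortably below \(\nu_0^\alpha\). Here I would use the degree bound again: each \(z \in A_x\) is non-adjacent to at most \(|V|-2-d_z \leq \frac{|V|}{3}-1\) vertices of \(N_{xy}\cup A_y = S_1(y)\setminus\{x\}\), so the bipartite adjacency between \(A_x\) and \(N_{xy}\cup A_y\) is very dense. Modelling the residual routing as a capacitated bipartite flow --- sources \(A_x\) with supplies \(\frac{1-\alpha}{d_x}\), sinks \(N_{xy}\cup A_y\) with the demands above, and edges exactly the distance-\(1\) pairs --- I would invoke the max-flow--min-cut (Gale feasibility / defect Hall) criterion to show that all of the demand on \(N_{xy}\cup A_y\) can be met from \(A_x\) at distance \(1\). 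Consequently \(\nu_2^\alpha\) collapses to the forced \(y\)-term, giving \(\nu_2^\alpha \leq \nu_0^\alpha\) and hence \(\kappa(x,y)\geq 0\).

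I expect the main obstacle to be precisely this flow estimate: verifying the min-cut condition for the non-uniform demands (the \(A_y\)-sinks carry the larger demand \(\frac{1-\alpha}{d_y}\), the \(N_{xy}\)-sinks the smaller demand \((1-\alpha)(\frac1{d_y}-\frac1{d_x})\)) requires bounding, for every sink subset \(T\), its total demand against the supply of its \(A_x\)-neighbourhood. Because \(|A_x|\), \(|A_y|\) and the relevant non-neighbour counts are all of order \(\frac{|V|}{3}\), the inequality is tight, and it is exactly here that the constant \(\frac{2}{3}\) --- as opposed to any weaker linear bound on \(\delta(G)\) --- must be used; the sharpness example of Theorem~\ref{th:sharpness} shows the estimate genuinely fails one step below. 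A clean way to organise this step is to bound the distance-\(2\) mass directly by the total supply on the \(A_x\)-vertices isolated by a worst-case cut, and then to check the resulting inequality against \(\nu_0^\alpha\) using \(|N_{xy}| \geq 2\delta(G)-|V|\).
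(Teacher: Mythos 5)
Your overall skeleton---reducing to $\kappa_\alpha(x,y)\ge 0$ via Theorem~\ref{th:relation_curvature}, killing $\nu_3^\alpha$ with a diameter bound, and computing $\nu_0^\alpha$ from \thref{dontmove}---is exactly the paper's setup. The gap is at your crux: the claim that the residual demand on $N_{xy}\cup A_y$ can always be met from $A_x$ at distance $1$ (so that $\nu_2^\alpha$ collapses to the forced $y$-term) is \emph{false} under the hypothesis $\delta(G)\ge \frac{2|V|}{3}-1$. Concretely, take $V=\{x,y,z_1,z_2,z_3,a_1,a_2,w,b\}$ with edges $x\sim y$; $x\sim z_i$ and $y\sim z_i$ for all $i$; $x\sim a_1,a_2$; $y\sim w,b$; $z_i\sim z_j$ for $i\ne j$; $z_i\sim a_1,a_2,w,b$ for all $i$; $a_1\sim a_2$; $a_1\sim b$; $a_2\sim b$; $w\sim b$. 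Then $|V|=9$, $d_x=d_y=d_{a_1}=d_{a_2}=6$, $d_{z_i}=8$, $d_b=7$, $d_w=5$, so $\delta(G)=5=\frac{2\cdot 9}{3}-1$ and the hypothesis holds. Here $N_{xy}=\{z_1,z_2,z_3\}$, $A_x=\{a_1,a_2\}$, $A_y=\{w,b\}$, and $w$ has \emph{no} neighbour in $A_x$ (its neighbours are $y,z_1,z_2,z_3,b$). The Gale/Hall condition thus fails already for the singleton sink set $T=\{w\}$: the demand $(1-\alpha)/d_y>0$ at $w$ can only be filled over distance $2$, in every transport plan. Moreover $d_x=d_y$ makes your forced $y$-term $(1-\alpha)\bigl(\frac{1}{d_y}-\frac{1}{d_x}\bigr)$ equal to zero, so $\nu_2^\alpha\ge (1-\alpha)/6>0$ certainly does not collapse to it. Note that your source-side density estimate is satisfied in this graph (each $a_i$ misses only $w$ among $S_1(y)\setminus\{x\}$, well within the bound $|V|/3-1=2$); this shows that density of the bipartite graph seen from $A_x$ cannot yield the sink-side Hall condition when $|A_x|$ is small. (A smaller issue: your diameter argument needs $|V|>6$; the paper's \thref{lem:diam_bound} uses that $z,w\notin S_1(z)\cup S_1(w)$ when $d(z,w)>2$, which gains an extra $2$ and covers all cases.)

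The paper's proof avoids routing the moving mass altogether, and this is the step you are missing: since $\nu_0^\alpha+\nu_1^\alpha+\nu_2^\alpha=1$, one has $\kappa_\alpha(x,y)=\nu_0^\alpha-\nu_2^\alpha\ge 2\nu_0^\alpha-1$, so it suffices to prove that the shared mass satisfies $\nu_0^\alpha\ge \tfrac12$. With the paper's normalisation ($d_x\le d_y$, $\alpha=\frac{1}{d_x+1}$) this reduces to two counting facts that follow directly from the degree hypothesis, namely $|N_{xy}|\ge \frac{|V|}{3}-2$ and $d_y\le \frac{|V|}{3}+1+|N_{xy}|$; no transport plan is ever exhibited. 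Your own fallback (``bound the distance-$2$ mass by the supply of the vertices isolated by a worst-case cut and compare with $\nu_0^\alpha$'') points in this direction, but once you concede that whole sinks can be unreachable at distance $1$, the only estimate available in general is the trivial one, $\nu_2^\alpha\le 1-\nu_0^\alpha$ --- and with that estimate the entire max-flow machinery becomes unnecessary. So the repair is: keep your computation of $\nu_0^\alpha$, discard the flow feasibility claim, and finish by verifying $2\nu_0^\alpha-1\ge 0$ from the two counting inequalities above.
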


For the proof of Theorem~\ref{th:main_result}, we require the following lemma, which guarantees that the diameter of a graph is at most two when the minimal degree is sufficiently large.

\begin{lemma}\thlabel{lem:diam_bound}
    Let $G=(V,E)$ be a finite graph with minimum degree satisfying
    \begin{equation*}
        \delta(G) \geq \frac{\vert V \vert - 1}{2}.
    \end{equation*}
    Then the diameter of $G$ satisfies $diam(G) \leq 2$.
\end{lemma}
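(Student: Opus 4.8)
The plan is to prove the contrapositive-flavored bound directly: assume $\delta(G) \geq (|V|-1)/2$ and show that any two vertices are within distance two. Take arbitrary $x, y \in V$ with $x \neq y$; if $x \sim y$ then $d(x,y) = 1 \leq 2$, so assume $x \not\sim y$. It suffices to show that $x$ and $y$ share a common neighbor, i.e. $S_1(x) \cap S_1(y) \neq \emptyset$, which forces $d(x,y) = 2$.

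The key step is a counting argument on the neighborhoods. Consider the sets $S_1(x)$ and $S_1(y)$, which satisfy $|S_1(x)| = d_x \geq \delta(G)$ and $|S_1(y)| = d_y \geq \delta(G)$. Since $x \not\sim y$, neither $x$ nor $y$ lies in $S_1(x) \cup S_1(y)$; indeed $y \notin S_1(x)$ and $x \notin S_1(y)$, and $x \notin S_1(x)$, $y \notin S_1(y)$ because the graph is simple (no self-loops). Hence both $S_1(x)$ and $S_1(y)$ are subsets of $V \setminus \{x,y\}$, a set of size $|V| - 2$. Now I would apply inclusion-exclusion:
\[
    |S_1(x) \cap S_1(y)| = |S_1(x)| + |S_1(y)| - |S_1(x) \cup S_1(y)| \geq 2\delta(G) - (|V| - 2).
\]
Using the hypothesis $\delta(G) \geq (|V|-1)/2$, the right-hand side is at least $(|V| - 1) - (|V| - 2) = 1 > 0$, so the intersection is nonempty.

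This establishes that $x$ and $y$ have a common neighbor $z$, giving the path $x \sim z \sim y$ and therefore $d(x,y) \leq 2$. Since $x, y$ were arbitrary, $diam(G) \leq 2$, completing the proof. I do not anticipate a genuine obstacle here; the only point requiring mild care is handling the edge-versus-non-edge dichotomy cleanly and ensuring $S_1(x), S_1(y)$ are correctly counted inside $V \setminus \{x,y\}$ so that the inclusion-exclusion bound is tight enough. The strictness $2\delta(G) - (|V|-2) \geq 1$ is exactly what the hypothesis delivers, which is reassuring for confirming the bound is stated at the right threshold.
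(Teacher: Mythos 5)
Your proof is correct and uses essentially the same argument as the paper: both rest on the observation that $S_1(x)\cup S_1(y)\subseteq V\setminus\{x,y\}$ together with the degree bound $d_x+d_y\geq 2\delta(G)\geq \vert V\vert-1$. The only difference is presentational — you argue directly via inclusion-exclusion that a common neighbor exists, while the paper assumes $d(x,y)>2$ and derives a contradiction from the same count.
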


\begin{proof}
    Assume that $diam(G) > 2$. Hence, there exist vertices $x,y \in V$ such that $d(x,y) >2$, and therefore $S_{1}(x) \cap S_{1}(y) = \emptyset$. As $x,y \notin S_{1}(x) \cup S_{1}(y)$, we have 
    \begin{equation*}
        \vert S_{1}(x) \cup S_{1}(y) \vert \leq \vert V \vert - 2.
    \end{equation*}
    This contradicts 
    \begin{equation*}
        \vert S_{1}(x) \cup S_{1}(y)\vert = d_x + d_y\geq 2\delta(G) \geq \vert V \vert - 1.
    \end{equation*}
    Therefore, our assumption was incorrect, and it follows that $diam(G) \leq 2$ must hold.
\end{proof}

We are now prepared to prove Theorem~\ref{th:main_result}.

\begin{proof}[Proof of Theorem~\ref{th:main_result}]
    Let $x \sim y$ be an arbitrary edge in $G$ and assume, without loss of generality, that $d_x \leq d_y$. Choose
    \[
        \alpha = \frac{1}{d_x + 1}.
    \]
    According to Theorem~\ref{th:relation_curvature}, the equality
    \[
        \kappa(x,y) = \frac{1}{1-\alpha}\kappa_{\alpha}(x,y)
    \]
    holds. Hence, it suffices to prove that $\kappa_{\alpha}(x,y) \geq 0$. Let $\pi$ be an optimal transport plan, transporting $\mu_{x}^{\alpha}$ to $\mu_y^{\alpha}$. Denote by $\nu_{i}^{\alpha}$ the mass transported with distance $i$ under $\pi$. According to Lemma~\ref{lem:diam_bound}, we have $\nu_3^{\alpha} = 0$ and therefore
    \[
        \nu_0^{\alpha} + \nu_{1}^{\alpha} + \nu_2^{\alpha} = 1.
    \] 
    Thus, we obtain
    \begin{align*}
        \kappa_{\alpha}(x,y) &= 1 - W_1(\mu_x^{\alpha}, \mu_y^{\alpha})\\
        &= \nu_0^{\alpha} + \nu_{1}^{\alpha} + \nu_2^{\alpha} - \nu_{1}^{\alpha} - 2\nu_2^{\alpha}\\
        &= \nu_0^{\alpha} - \nu_2^{\alpha}.
    \end{align*}
    
    According to \thref{dontmove}, we may assume
    \[
        \pi(z,z) = \min\{\mu_{x}^{\alpha}(z),\mu_y^{\alpha}(z)\} 
    \]
    for every $z\in V$. Therefore, $\pi(z,z) = (1-\alpha)/d_y$ for every $z \in N_{xy}$, $\pi(x,x) = (1-\alpha)/d_y$, and $\pi(y,y) = (1-\alpha)/d_x$. This yields
    \[
        \nu_{0}^{\alpha} = \frac{1-\alpha}{d_y}(\vert N_{xy}\vert +1) + \frac{1-\alpha}{d_x}.
    \]
    Since $\nu_2^{\alpha} \leq 1 - \nu_0^{\alpha}$, we conclude
    \begin{align*}
        \kappa_{\alpha}(x,y) &\geq 2\nu_0^{\alpha} - 1\\
        &= 2(1-\alpha)\left[\frac{\vert N_{xy}\vert +1}{d_y} + \frac{1}{d_x} - \frac{1}{2(1-\alpha)}\right] \\
        &= 2(1-\alpha)\left[\frac{\vert N_{xy}\vert +1}{d_y} + \frac{1}{d_x} - \frac{d_x+1}{2d_x}\right] \\
        &= 2(1-\alpha)\left[\frac{\vert N_{xy}\vert +1}{d_y} + \frac{1}{2d_x} - \frac{1}{2}\right] \\
        &\geq 2(1-\alpha)\left[\frac{2\vert N_{xy}\vert +2}{2d_y} + \frac{1}{2d_y} - \frac{1}{2}\right]\\
        &\geq (1-\alpha)\left[\frac{2\vert N_{xy}\vert +3}{d_y} - 1\right].
    \end{align*}
    Note that
    \[
        \vert V \vert \geq \vert S_1(x) \cup S_1(y) \vert 
        = d_x + d_y - \vert N_{xy} \vert 
        \geq 2\delta(G) - \vert N_{xy}\vert \geq \frac{4\vert V \vert}{3} - 2 - \vert N_{xy} \vert.
    \]
    Hence, we conclude 
    \[
        \vert N_{xy} \vert \geq \frac{\vert V \vert}{3} -2.
    \]
    Furthermore, we have
    \[
        d_y \leq \vert V \vert - d_x + \vert N_{xy} \vert \leq \vert V \vert - \delta(G) + \vert N_{xy} \vert \leq \frac{\vert V \vert}{3} + 1 + \vert N_{xy} \vert.
    \]
    Putting the inequalities together yields 
    \begin{align*}
        2\vert N_{xy} \vert + 3 - d_y &\geq 2\vert N_{xy} \vert + 3 - \left[ \frac{\vert V \vert}{3} + 1 + \vert N_{xy} \vert \right]\\
        &= \vert N_{xy} \vert + 2 - \frac{\vert V \vert}{3} \\
        &\geq 0,
    \end{align*}
        
     and therefore
    \[
        \kappa_{\alpha}(x,y) \geq (1-\alpha)\left[\frac{2\vert N_{xy}\vert +3}{d_y} - 1\right] \geq 0.
    \]
    Since the edge $x \sim y$ was chosen arbitrarily, we conclude
    \[
        Ric(G) \geq 0.
    \]
\end{proof}

Next, we discuss the sharpness of the lower bound.

\begin{theoremS}
    Let $n \in \mathbb{N}$ such that $2n/3 \in 2 \mathbb{N}$ and $2n/3\geq 6$. Then, there exists a graph $G=(V,E)$ with $\vert V \vert = n$ and 
    \[
        \delta(G) = \frac{2\vert V \vert}{3} -2,
    \]
    that contains an edge $x \sim y$ with $\kappa(x,y) < 0$.
\end{theoremS}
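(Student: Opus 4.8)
The plan is to prove the statement by an explicit construction rather than by any abstract argument. Write $m = n/3$; the hypotheses $2n/3 \in 2\mathbb{N}$ and $2n/3 \geq 6$ are equivalent to $m \in \mathbb{N}$ with $m \geq 3$, and the target minimum degree is $d := \tfrac{2n}{3} - 2 = 2m - 2$. The guiding idea is to engineer a distinguished edge $x \sim y$ whose two ``private'' neighborhoods $S_1(x)\setminus B_1(y)$ and $S_1(y)\setminus B_1(x)$ are large, disjoint, and joined by no edges at all; then every bijection in the formula of Theorem~\ref{th:Lin_Lu_Yau_curvature} is forced to transport mass over distance exactly two, which is what drives $\kappa(x,y)$ below zero.

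Concretely I would take $V = \{x,y,u\} \cup A \cup B \cup N$ with $|A| = |B| = m$ and $|N| = m-3$, so that $|V| = 2 + 1 + 2m + (m-3) = 3m = n$, and impose the adjacencies: $x \sim y$; $x$ is joined to all of $A \cup N$ and $y$ to all of $B \cup N$; each vertex of $A$ is joined to $u$, to every vertex of $N$, and to every other vertex of $A$; symmetrically for $B$; and there are \emph{no} edges between $A$ and $B$. A direct degree count then gives $d_x = d_y = 1 + m + (m-3) = 2m-2$ and likewise $\deg(a) = \deg(b) = 2m-2$ for $a \in A$, $b \in B$, while $u$ (degree $2m$) and the vertices of $N$ (degree $2m+2$) have strictly larger degree; hence $\delta(G) = 2m-2$ exactly. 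When $m = 3$ the set $N$ is empty, and I would treat this boundary case by hand — there the common neighbor witnessing $d(x,b) \le 2$ is simply $y$ rather than a vertex of $N$.

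Next I would check $diam(G) \leq 2$ and, crucially, that every pair $(a,b) \in A \times B$ lies at distance exactly $2$: such $a,b$ are non-adjacent by construction but share the common neighbor $u$. One also confirms that $N$ is precisely the common-neighbor set $N_{xy}$, so that $S_1(x)\setminus B_1(y) = A$ and $S_1(y)\setminus B_1(x) = B$, each of size $m$. Applying Theorem~\ref{th:Lin_Lu_Yau_curvature} with $d_x = d_y = 2m-2$, and using that $d(z,\phi(z)) = 2$ for every bijection $\phi : A \to B$ (since no $A$–$B$ edges exist), the infimum equals $2m$, whence
\[
    \kappa(x,y) = \frac{1}{2m-2}\bigl((2m-2) + 1 - 2m\bigr) = \frac{-1}{2m-2} < 0 ,
\]
as required.

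The main point requiring care is the internal consistency of the construction rather than any single delicate estimate. One must verify that the prescribed degrees are simultaneously realizable as a simple graph with minimum degree \emph{exactly} $2m-2$; in particular, that a private vertex in $A$ (or $B$) can reach degree $2m-2$ without ever using an $A$–$B$ edge. This is exactly why the auxiliary vertex $u$ and the choice $|N| = m-3$ are forced: in a $d$-regular graph on $3m$ vertices each private vertex of $A$ would necessarily acquire a neighbor in $B$, so the distance-two structure could not be maintained. Once the adjacencies above are fixed, the degree bookkeeping, the diameter check, and the identification $N = N_{xy}$ are all routine, and the curvature computation is immediate from Theorem~\ref{th:Lin_Lu_Yau_curvature}.
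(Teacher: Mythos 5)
Your construction is exactly the paper's: with $m = l+1$, your sets $A$, $B$, $N$ and vertex $u$ correspond to the paper's $\{x_0,\dots,x_l\}$, $\{y_0,\dots,y_l\}$, $\{z_0,\dots,z_{l-3}\}$ and $v$, with the same adjacencies, the same degree bookkeeping yielding $\delta(G) = 2m-2 = 2|V|/3 - 2$, and the same application of Theorem~\ref{th:Lin_Lu_Yau_curvature} giving $\kappa(x,y) = -1/(2m-2) < 0$. The argument is correct and essentially identical to the paper's proof.
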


\begin{proof}
    Define the vertex set
    \begin{equation*}
        V = \{x,y, z_{0}, \dots, z_{l-3}, x_{0}, \dots, x_{l}, y_{0}, \dots, y_{l}, v\},
    \end{equation*}
    where $l \geq 2$. Next, we add the edges
    \begin{itemize}
        \item $x\sim y$,
        \item $x \sim z_{i}$ for $i = 0,\dots, l-3$,
        \item $x \sim x_{i}$ for $i = 0,\dots, l$
        \item $y \sim z_{i}$ for $i = 0,\dots, l-3$,
        \item $y \sim y_{i}$ for $i = 0,\dots, l$.
    \end{itemize}
    to the set of edges $E$. Note that there are no vertices $z_i$ in the case where $l=2$. The vertices $x$ and $y$ are of degree $2l$, and
    \begin{align*}
        N_{xy} &= \{z_{0}, \dots, z_{l-3}\}, \\
        S_1(x)\setminus B_1(y) &= \{x_{0}, \dots, x_{l}\}, \\S_1(y)\setminus B_1(x)&= \{y_{0}, \dots, y_{l}\}.
    \end{align*}
    Next, we add for every $z_{i} \in N_{xy}$ the edges $z_{i} \sim x_{j}$ and $z_{i} \sim y_{j}$ for $j = 0, \dots, l$.
    Thus, every $z_{j} \in  N_{xy}$ satisfies
    \[
        d_{z_j} = 2(l +1) + 2 > 2l.
    \]

    Additionally, add the edges $x_{i} \sim x_{j}$ and $y_i \sim y_j$ for $i \neq j$, as well as $v \sim x_i$ and $v \sim y_i$ for $i = 0, \dots,l$. Hence, we obtain
    \begin{align*}
        d_{x_i} &= 1 + l + l-2 + 1 = 2l \quad \forall i \in \{0, \dots, l\},\\
        d_{y_i} &= 1 + l + l-2 + 1 = 2l \quad \forall i \in \{0, \dots, l\}, \\
        d_v &= 2(l+1) > 2l.
    \end{align*}
    Therefore, the resulting graph satisfies $\delta(G) = 2l$. Furthermore, we have $\vert V \vert = 3l + 3$, and therefore
    \[
        \delta(G) = \frac{2\vert V \vert}{3} - 2.
    \]
    
    Since $d_x = d_y$, we can apply Theorem~\ref{th:Lin_Lu_Yau_curvature}. Note that, by construction, the equality $d(x_i,y_j) = 2$ holds for every $i,j \in \{0,\dots,l\}$. Hence, $d(z, \phi(z)) = 2$ for every $z\in S_{1}(x) \setminus B_{1}(y)$ and every bijection $\phi: S_{1}(x) \setminus B_{1}(y) \to S_{1}(y) \setminus B_{1}(x)$. Thus, we conclude
    \begin{align*}
        \kappa(x,y) &= \frac{1}{d_x}\Biggl(d_x + 1 - \inf_{\phi} \mathlarger{\sum}_{z \in S_{1}(x) \setminus B_{1}(y)}d(z, \phi(z))\Biggr)\\
        &= \frac{1}{d} \Biggl(d_x + 1 - 2 \vert S_{1}(x) \setminus B_{1}(y)\vert\Biggr) \\
        &= \frac{1}{d} \Biggl(d_x + 1 - 2\left(\frac{d_x}{2} +1\right)\Biggr) \\
        &= -\frac{1}{d}.
    \end{align*}
\end{proof}

\bigskip

{\bf{Acknowledgements:}} I would like to express my deepest gratitude to Prof. Dr. Renesse and Dr. M{\"u}nch for their invaluable support and insightful feedback throughout the course of this work.

\section*{Declarations}

{\bf{Data availability:}} No datasets were generated or analysed during the current study.

{\bf{Competing interests:}} The author certifies that he has no affiliations with or involvement in any organization or entity with any financial interest or non-financial interest in the subject matter or materials discussed in this manuscript.

{\bf{Funding:}} Partial financial support was received from the BMBF (Federal Ministry of Education and Research) in DAAD project 57616814 (SECAI, School of Embedded Composite AI).


\begin{thebibliography}{99}
    \bibitem{Forman2003} Robin Forman. "Bochner’s method for cell complexes and combinatorial Ricci curvature". In: \textit{Discrete and Computational Geometry} 29.3 (2003), pp. 323–374.

    \bibitem{Ollivier2009} Yann Ollivier. "Ricci curvature of Markov chains on metric spaces". In: \textit{Journal of Functional Analysis}, 256.3 (2009), pp. 810-864. 

    \bibitem{LLY2011} Yong Lin, Linyuan Lu, and Shing-Tung Yau. "Ricci curvature of graphs". In: \textit{Tohoku Math. J.} 63.4 (2011), pp. 605- 627. 

    \bibitem{Lin2012} Yong Lin and Shing-Tung Yau. "Ricci curvature and eigenvalue estimate on locally finite graphs". In: \textit{Mathematical research letters} 17.2 (2010), pp. 343–356.

    \bibitem{EM2012} Matthias Erbar and Jan Maas. " Ricci Curvature of Finite Markov Chains via Convexity of the Entropy". In: \textit{Archive for Rational Mechanics and Analysis} 206 (2012), pp. 997-1038. 

    \bibitem{RS2005} Max-K. von Renesse and Karl-Theodor Sturm. "Transport inequalities, gradient estimates, entropy and Ricci curvature". In: \textit{Communications on pure and applied mathematics}, 58.7 (2005), pp. 923-940.  

    \bibitem{Muench2023} Florentin M\"unch. "Non-negative Ollivier curvature on graphs, reverse Poincaré inequality, Buser inequality, Liouville property, Harnack inequality and eigenvalue estimates". In: \textit{Journal de Mathématiques Pures et Appliquées} 170 (2023), pp. 231-257.

    \bibitem{Jost2019} J{\"u}rgen Jost. "Liouville property and non-negative Ollivier curvature on graphs". arXiv preprint arXiv: 1903.10796 (2019).

    \bibitem{Chung2014} Fan Chung et al. "Harnack inequalities for graphs with non-negative Ricci curvature". In: \textit{J. Math. Anal. Appl.} 415.1 (2014), pp. 25-32.

    \bibitem{Erbar2018} Matthias Erbar and Max Fathi. "Poincaré, modified logarithmic Sobolev and isoperimetric inequalities for Markov chains with non-negative Ricci curvature". In: \textit{J. Funct. Anal.} 274.11 (2018), pp. 3056–3089.

    \bibitem{Horn2017}  Paul Horn et al. "Volume doubling, Poincaré inequality and Gaussian heat kernel estimate for non-
    negatively curved graphs." In: \textit{Journal für die reine und angewandte Mathematik} 2019.757 (2017), pp. 89-130.
    
    \bibitem{Bauer2015} Frank Bauer et al. "Li-Yau inequality on graphs". In: \textit{J. Diﬀer. Geom.} 99.3 (2015), pp. 359–405.

    \bibitem{Lui2019}  Shuang Liu. "Buser’s inequality on infinite graphs". In: \textit{J. Math. Anal. Appl.} 475.2 (2019), pp. 1416–1426.

    \bibitem{Muench2018} Florentin Münch. "Li–Yau inequality on finite graphs via non-linear curvature dimension conditions". In: \textit{J. Math. Pures Appl.} 120 (2018), pp. 130–164.

    \bibitem{MH2024} Moritz Hehl. "Ollivier-Ricci curvature of regular graphs". arXiv preprint arXiv: 2407.08854 (2024). 

    \bibitem{Eidi2020} Marzieh Eidi and J{\"u}rgen Jost. "Ollivier Ricci curvature of directed hypergraphs". In: \textit{Scientific Reports} 10.1 (2020), pp. 12466.

    \bibitem{Bourne2018} David P Bourne et al. "Ollivier-Ricci Idleness Functions of Graphs". In: \textit{SIAM Journal on Discrete Mathematics} 32.2 (2018), pp. 1408-1424. 
    
    \bibitem{Loisel2014} Beno{\^\i}t Loisel and Pascal Romon. "Ricci curvature on polyhedral surfaces via optimal transportation". In: \textit{Axioms} 3.1 (2014), pp. 119-139.
\end{thebibliography}
\end{document}